\documentclass[11pt]{amsart} 

\usepackage{color}
\usepackage{amsthm}
\usepackage{graphicx}
\usepackage{amssymb}
\usepackage[mathscr]{eucal}
\usepackage{amsfonts}
\usepackage{amsmath}
\usepackage{amsthm}
\usepackage{amssymb}
\usepackage{latexsym}

\newcommand{\bc}{\begin{center}}
\newcommand{\ec}{\end{center}}

\newcommand{\be}{\begin{equation}}
\newcommand{\ee}{\end{equation}}

\newcommand{\beqn}{\begin{eqnarray*}}
\newcommand{\eeqn}{\end{eqnarray*}}

\newtheorem{theorem}{Theorem}
\newtheorem*{theorem*}{Theorem}

\theoremstyle{definition}
\newtheorem{definition}{Definition}
\newtheorem{remark}{Remark}

\usepackage{pinlabel}
\newcommand{\pic}[2]{\raisebox{-.5\height}{\includegraphics[scale=#2]{#1}}}
\newcommand{\pica}[2]{\raisebox{-.57\height}{\includegraphics[scale=#2]{#1}}}
\newcommand\Xor{\pic{xor}{.50}}
\newcommand\Yor{\pic{yor} {.50}}
\def\Idor{\pic{idor} {.50}}
\def\rcurlor{\pic{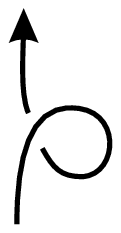} {.50}}
\def\lcurlor{\pic{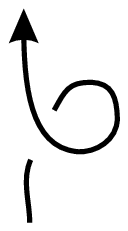} {.50}}
\newcommand\unknot{\pic{unknot} {.20}}

\newcommand\xiibraidAnn{\pica{xiibraidAnn}{.36}}
\newcommand\taubraidAnn{\pic{taubraidAnn}{.45}}
\newcommand\closeAnn{\pic{closeAnn}{.45}}
\newcommand\backid{\pic{backidAnn}{.45}}
 \newcommand\frontid{\pic{frontidAnn}{.45}}
\newcommand{\xx}{{\mathbf{x}}}

\def\Idor{\pic{idor} {.50}}
\def\Idorband{\pic{idorband} {.60}}
\def\Idorrightband{\pic{idorrightband} {.60}}
\def\Idorleftband{\pic{idorleftband} {.60}}
\def\rcurlor{\pic{rcurlor.eps} {.50}}
\def\lcurlor{\pic{lcurlor.eps} {.50}}
\def\rcurlorband{\pic{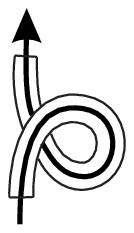} {.60}}
\def\lcurlorband{\pic{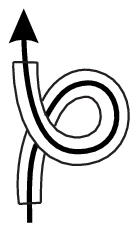} {.60}}
\def\unknotorband{\pic{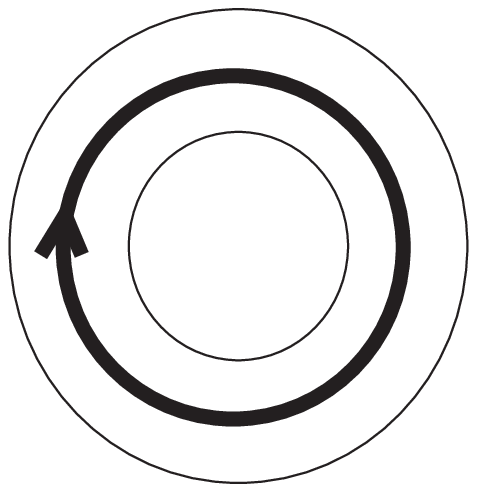} {.150}}

 \newcommand\Xorband{\pic{xorband}{.50}}
\newcommand\Yorband{\pic{yorband} {.50}}
\newcommand\Iorband{\pic{iorband} {.50}}

\def\CC{\mathcal{C}}
\def\Cl{\mathrm{Cl}}
\def\Z{\mathbb{Z}}
\def\sk{\mathrm{Sk}}
\newcommand {\bsk}{\mathrm{BSk}}
\def\adj{\mathrm{adj}}
\def\x{\times}

\begin{document}
\title{Tangles in affine Hecke algebras}
\author{Hugh Morton}
\date{\today}
\address{Department of Mathematical Sciences, University of Liverpool, Peach Street, Liverpool L69 7ZL, UK}
\email{morton@liverpool.ac.uk}

\begin{abstract}
The affine Hecke algebra $\dot H_n$ of type $A$ is often presented as a quotient of the braid algebra of $n$-braids in the annulus. This leads to diagrammatic representations in terms of braids in the annulus, subject to a quadratic relation for the simple Artin braids, as in the description by Graham and Lehrer in \cite{GL03}.

I show here that the use of more general framed oriented $n$-tangle diagrams in the annulus, subject to the Homfly skein relations, produces an algebra which is isomorphic to $\dot H_n$ with an extended ring of coefficients. This setting allows the use of some attractive diagrams for elements of $\dot H_n$, using closed curves as well as braids, and gives neat pictures for its central elements.
\end{abstract}
\maketitle
\section{Introduction}
In the course of looking for Homfly-based models for double affine Hecke algebras of type $A$, and other related algebras Samuelson and I have made use of  Homfly\footnote{I should really use the extended acronym HOMFLYPT, particularly since  Traczyk is  my co-author on one of the references. The abbreviation Homfly has, however,  the merit of reading a little bit more easily.}  skeins of braids and more general tangles in the annulus and torus \cite{MS19}. Although we had been familiar with the braid pictures of Graham and Lehrer \cite{GL03} we were not able to find an explicit reference in print to the anticipated extension for the annulus to include the use of closed curves, when asked about this recently by Alistair Savage. 

I have written this paper partly in response to Savage's question, to give a good account of the exact relations between the affine Hecke algebras and their full skein representations.
Following a similar analysis by myself and Traczyk \cite{MT90} of the finite Hecke algebras $H_n$ in terms of oriented $n$-tangles in $D^2$ I use  the same careful choice of ring $\Lambda$ for the Homfly skeins. This can be coupled with Turaev's result \cite{Tur88} that the Homfly skein $\CC$ of closed curves in the annulus is a free polynomial algebra to prove that the Homfly skein $\sk_n(A)$ of $n$-tangles in the annulus $A$  has the following algebra structure.
\begin{theorem*}[Theorem \ref{annulusHecke}]
\[\sk_n(A)\cong \dot H_n\otimes \CC\]
\end{theorem*}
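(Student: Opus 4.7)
The plan is to construct a $\Lambda$-algebra homomorphism $\Phi \colon \dot H_n \otimes_\Lambda \CC \to \sk_n(A)$ and then show it is bijective, mirroring the MT90 argument for the disk but with the extra central factor $\CC$ arising from closed curves winding the annulus.

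To build $\Phi$, I first map the affine braid group on $n$ strands into $\sk_n(A)$ by sending each braid word to its diagram class. Because the Homfly skein relation over $\Lambda$ reduces on a simple positive crossing to the Hecke quadratic relation on the Artin generators $\sigma_i$, this map descends through the Graham--Lehrer presentation to give a $\Lambda$-algebra map $\dot H_n \to \sk_n(A)$. Separately, $\CC$ embeds into $\sk_n(A)$ by placing each closed-curve diagram into a sub-annulus lying entirely on one side of the $n$ tangle endpoints; such closed curves can be isotoped past the braid part without creating crossings, so their image is central in $\sk_n(A)$. Combining these two compatible maps yields $\Phi$.

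For surjectivity I apply the Homfly skein relation at each crossing of a given $n$-tangle diagram $T$, rewriting $[T]$ as a $\Lambda$-linear combination of classes of crossingless $n$-tangles with the same boundary. Each crossingless diagram decomposes up to isotopy into an affine permutation braid (a power of the annulus translation generator composed with an identity-pattern of strands) together with a disjoint collection of closed curves, which respectively realise elements of $\dot H_n$ and $\CC$; hence $[T]$ lies in the image of $\Phi$.

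The main obstacle is injectivity. My plan is to exhibit $\sk_n(A)$ as a free $\CC$-module whose rank matches the $\Lambda$-rank of $\dot H_n$, using Turaev's theorem that $\CC$ is a free polynomial algebra on the winding classes together with the MT90 identification $\sk_n(D^2) \cong H_n$. Concretely, I want a $\CC$-linear splitting of $\sk_n(A)$ that isotopes all closed-curve components into an outer sub-annulus and retains a residual affine Hecke basis in the inner part; well-definedness of this splitting is controlled precisely by the centrality established in the construction of $\Phi$, while $\Lambda$-linear independence of the residual braid basis reduces, via a disk-like restriction argument, to the freeness of $H_n$ established in MT90. Assembling these ingredients completes the proof.
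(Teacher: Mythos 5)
There are two genuine gaps here, one in each half of your argument. For surjectivity, your reduction of an arbitrary $n$-tangle to a $\Lambda$-linear combination of \emph{crossingless} diagrams is not available in the Homfly skein: the quadratic relation only lets you \emph{switch} a crossing at the cost of a $z$ times a smoothing, so the switched term has just as many crossings as before. Indeed the claim cannot be repaired, since (as you yourself note) crossingless $n$-tangles in the annulus are just powers of $\tau$ together with closed curves, and these span only a proper $\CC$-submodule --- they miss the positive permutation braids $b_\omega$ and the independent windings $x_1^{r_1}\cdots x_n^{r_n}$. The correct reduction (an induction on crossing number) terminates not at crossingless diagrams but at \emph{totally ascending} ones, which are isotopic to braids $v_\rho=x_1^{r_1}\cdots x_n^{r_n}b_\omega$ lying in front of a closed-curve element of $\CC$; the switched terms are handled because the smoothed error terms have fewer crossings.

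For injectivity, your plan is not yet an argument. The ``disk-like restriction'' to MT90 cannot work, because $\dot H_n$ is of infinite rank over $\Lambda$ (basis indexed by $\Z^n\times S_n$) while $H_n$ has rank $n!$: restricting to a disk destroys exactly the winding data that distinguishes, say, $x_1x_2^{-1}b_\omega$ from $b_\omega$, so no such restriction can separate the $v_\rho$. What is needed is an annular invariant: one takes the closure map $\Cl:\sk_n(A)\to\CC$, forms for any finite set $J$ of indices the matrix $C_{\mu,\nu}=\Cl(v_\mu v_\nu^{-1})$ over the integral domain $\CC$, and shows $\det(C_J)\ne0$ by pushing the entries to $\Z[\delta]$ via the specialisation $z=0$, $v=1$ (where only the terms with all winding numbers zero survive, leaving a dominant diagonal $\delta^{kn}$); the adjoint-matrix identity then kills any putative $\CC$-linear relation. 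Your instinct that centrality of $\CC$ and Turaev's freeness are the relevant structural inputs is right, but without a concrete pairing of this kind the independence of the infinitely many $v_\rho$ over $\CC$ is left unproved.
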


Rather than incorporate the proof of theorem \ref{annulusHecke} into our work on the double affine Hecke algebras \cite{MS19} it seemed preferable to present it as a self-contained note, more along the lines of the old work in \cite{MT90}, which itself does not seem to be widely available.

\section{Background}

We start with a brief account of Homfly skein theory, directed at the construction of algebras related to a surface $F$, so as to establish the exact parameters we need, before concentrating on the annulus, and briefly reviewing \cite{MT90}.

For a $3$-manifold $M$ the Homfly {skein} $\sk(M)$ is based on oriented framed curves in $M$ up to isotopy.
\begin{definition}
The (Homfly) skein $\sk(M)$ of a $3$-manifold $M$ is the set of 
 $\Lambda$-linear combinations of framed oriented curves in $M$ up to isotopy, factored out by three
local linear relations.

These are 

\begin{enumerate}
\item $ \quad\Xorband-\Yorband \ =\ z\ \Iorband $ \quad{ (quadratic),}
\item $\quad\Idorleftband\ =\ \lcurlorband\ =\ v\ \Idorband, \quad\Idorrightband\ =\ \rcurlorband \ =\ v^{-1}\ \Idorband $\quad{\rm (framing),} 
\item $\quad\Idorband \unknotorband \ =\  \delta\  \Idorband $ \quad{\rm (unknot).}
\end{enumerate}
\end{definition}

In these relations the local framing is drawn as a band, and we assume that the diagrams are identical outside the part in a $3$-ball as shown.

For compatibility we need $\delta z=v^{-1}-v$  in $\Lambda$.  We shall work here with
\[\Lambda=\Z[z,\delta,v^{\pm1}]/<\delta z=v^{-1}-v> \subset \Z[z^{\pm1}, v^{\pm1}].\]

This is in many ways the natural ring to use for the basic Homfly polynomials, since the only occurrences of $z^{-1}$ which are needed arise from factors of $\delta$.  There is an advantage in using this ring, since it is then possible to set $z=0$ and $v=1$ while retaining $\delta$. This gives us a ring homomorphism $e:\Lambda \to \Z[\delta]$ with $e(z)=0, e(v)=1, e(\delta)=\delta$.

\section{Surface skeins}

  For a surface $F$ the skein of the thickened surface $M=F\times I$ admits a natural product making it an algebra over $\Lambda$. The product structure is defined by stacking copies of $F\x I$.
 Write $\sk(F)$ rather than $\sk(F\x I)$ for this algebra.

In the  case $F=D^2$ we have $\sk(F)\cong\Lambda$, with the empty diagram acting as the identity element and a link $L$ in $\sk(D^2)$ corresponding to a normalised version of its Homfly polynomial in $\Lambda$.

Where $F$ is the annulus $A=S^1\x I$ the algebra $\sk(A):=\CC$ is commutative. It was shown by Turaev \cite{Tur88} that $\CC$ is a free polynomial algebra on commuting  generators $\{A_m, m\ne 0\in\Z\}$. Subsequent work has produced more interesting interpretations of $\CC$ and useful bases  besides the basis of monomials in Turaev's generators. For example a basis $\{Q_{\lambda,\mu} \}$ indexed by pairs of partitions of integers is described in \cite{HM06}, and other features of $\CC$ are discussed in \cite{MM08}.

For other surfaces $F$ the algebra $\sk(F)$ is generally non-commutative. A presentation of the algebra $\sk(T^2)$  of the torus is given in \cite{MS17} with generators $\{P_\xx, \xx \ne 0\in \Z^2\}$, while  the sets of generators get uncomfortably large once the surface is more complicated.

\section{Relative skein algebras}

We can extend our skeins and the resulting algebras by including framed arcs as well as closed curves.
Fix $n$ points $J\subset F$ and include $n$ oriented framed arcs from  $J\x \{0\}$ to $J\x\{1\}$,  along with oriented framed curves to give an algebra for each $n$ on applying the skein relations. We assume that there is a  framing chosen at each point in $J$, which is fixed under any isotopy of the framed arcs. 

Write $\sk_n(F)$ for the resulting  algebra with $\sk_0(F)$  written as $\sk(F)$ for the skein of closed curves in $F$. 
\begin{remark} Up to isomorphism the algebra depends only on $n=|J|$, not on the exact choice of $J$ or the framing at the points of $J$.
\end{remark}

In $\sk_n(F)$ the $n$-braids play an important role - these are tangles with  $n$ monotonic arcs and no closed curves. If we stick to these, and just impose the quadratic relations then the known presentations of surface braid groups with the addition of the quadratic relation in the form $a^2-1=az$ for $a=\sigma_i$ give a presentation of the resulting algebra $\bsk_n(F)$. 

The simplest case is $F=D^2$, where $\bsk_n(D^2)$ gives the Hecke algebra $H_n(z)$. This has a nice basis of $n!$ elements, represented by the positive permutation braids $\{b_\omega, \omega\in S_n\}$.

Traczyk and I noted  \cite{MT90} that the full skein $\sk_n(D^2)$ is also spanned by these elements, and used the Homfly polynomial with the simplification given by mapping to $\Z[\delta]$ to prove them independent in $\sk_n(D^2)$. 
\begin{theorem} [Morton-Traczyk, 1986]
 \[\sk_n(D^2)\cong H_n(z)\otimes \Lambda\] \end{theorem}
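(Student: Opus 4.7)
The plan is to construct a natural $\Lambda$-algebra map $\Phi: H_n(z) \otimes_{\Z[z]} \Lambda \to \sk_n(D^2)$ sending each positive permutation braid $b_\omega$ to itself (with blackboard framing) and then prove that $\Phi$ is both surjective and injective. The map is well-defined because the Hecke defining relation $\sigma_i^2 = 1 + z\sigma_i$ is exactly the quadratic skein relation (1) applied to the braid generator $\sigma_i$, so the braid-algebra quotient $\bsk_n(D^2) = H_n(z)$ maps into $\sk_n(D^2)$, and tensoring with $\Lambda$ absorbs the extra coefficients $v$ and $\delta$. Since $\{b_\omega\}_{\omega \in S_n}$ is a classical $\Z[z]$-basis of $H_n(z)$, the source is a free $\Lambda$-module of rank $n!$, so the task reduces to showing that $\{b_\omega\}$ both spans and is linearly independent in $\sk_n(D^2)$.

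For surjectivity I would take an arbitrary framed oriented $n$-tangle diagram $T$ in $D^2 \x I$ and reduce it to a $\Lambda$-combination of permutation braids by a complexity induction. First use relation (2) to strip curls, absorbing an overall $v^k$ factor. Next eliminate closed components one at a time: a closed component in $D^2 \x I$ can always be isotoped to a separate unknotted circle after switching over/under at finitely many crossings, with each switch producing a tangle with strictly fewer crossings via relation (1); once separated the circle contributes a factor $\delta$ by relation (3). The remaining $n$-arc diagram is reduced to a braid by the same crossing-switch procedure, and inside $H_n(z)$ the standard Matsumoto descent rewrites every braid word as a $\Z[z]$-combination of the $b_\omega$.

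For injectivity I would use the Markov/Ocneanu trace $\tau: \sk_n(D^2) \to \sk(S^3) = \Lambda$ obtained by closing an $n$-tangle into a link in $S^3$. This gives a $\Lambda$-bilinear pairing $\langle S, T \rangle = \tau(ST)$, and if $\sum_\omega c_\omega b_\omega = 0$ in $\sk_n(D^2)$ then pairing with each $b_{\omega'}$ yields $\sum_\omega c_\omega \tau(b_\omega b_{\omega'}) = 0$. Since $\Lambda$ embeds in the domain $\Z[z^{\pm 1}, v^{\pm 1}]$, it suffices to show that the $n! \x n!$ Gram matrix $M = (\tau(b_\omega b_{\omega'}))$ has non-zero determinant in $\Lambda$.

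This determinant calculation is the main obstacle, and the decisive tool is the ring homomorphism $e: \Lambda \to \Z[\delta]$ with $e(z) = 0$, $e(v) = 1$. Under $e$ the quadratic relation becomes $\sigma_i = \sigma_i^{-1}$ (so $\sigma_i^2 = 1$), the framing relation is trivial, and each closed unknot contributes $\delta$; in particular $b_\omega$ specialises to the permutation $\omega$ and the closure of $b_\omega b_{\omega'}$ becomes a disjoint union of $c(\omega\omega')$ unknots, where $c$ counts cycles. Hence $e(M)$ is the $S_n \x S_n$ matrix with entries $\delta^{c(\omega\omega')}$, whose value depends only on the conjugacy class of $\omega\omega'$. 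A standard calculation using the orthogonality of $S_n$ characters (which diagonalises the centre of $\Z[\delta][S_n]$) shows that $\det(\delta^{c(\omega\omega')})$ is a non-zero polynomial in $\delta$. Therefore $e(\det M) \ne 0$, so $\det M \ne 0$ in $\Lambda$, giving injectivity of $\Phi$ and completing the isomorphism.
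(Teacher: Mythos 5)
Your proposal is correct and follows essentially the same route as the paper (and as \cite{MT90}, which the paper is summarising): span by a crossing-switching induction that reduces every tangle to positive permutation braids, then prove independence by closing pairs of basis elements into links, specialising the resulting Gram matrix along $e:\Lambda\to\Z[\delta]$ so that each closure becomes $\delta^{(\text{number of components})}$, and showing the specialised determinant is non-zero. The one genuine point of divergence is the last step: you pair $b_\omega$ with $b_{\omega'}$ and evaluate $\det\bigl(\delta^{c(\omega\omega')}\bigr)$ by the Frobenius group-determinant factorisation into content polynomials via character orthogonality, whereas the paper pairs with inverses (in \cite{MT90}, mirror images), so that the maximal power $\delta^n$ sits exactly on the diagonal and the determinant has a single uncancelled leading term $\delta^{n\cdot n!}$ --- a more elementary argument that avoids representation theory entirely; your matrix is in fact obtained from theirs by the column permutation $\omega'\mapsto\omega'^{-1}$, so the same leading-term argument would work for you too. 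Two small imprecisions worth fixing: in the spanning step it is the \emph{smoothed} correction term of relation (1), not the switched diagram, that has fewer crossings, so you need a secondary induction (the paper's ``totally ascending form'' on ordered strings) to see the switching process terminates; and if you keep the character-theoretic route you should say explicitly that the eigenvalues $\frac{1}{\dim\rho}\sum_\sigma\delta^{c(\sigma)}\chi_\rho(\sigma)$ are the content polynomials $\prod_{(i,j)\in\lambda}(\delta+j-i)$, which are visibly non-zero in $\Z[\delta]$.
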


\section{The skein algebras of the annulus}

We will follow the pictorial approach of \cite{GL03} to describe the skein $\sk_n(A)$. Draw diagrams of braids and tangles on a square with the left and right hand sides identified. The $n$ input points at the bottom  are joined by (framed) arcs to $n$ output points at the top. We can allow additional  oriented closed curves in the diagram while we don't assume that the arcs rise monotonically. We  use the blackboard  convention to indicate the framing when drawing diagrams in this way.

Composition is given by placing one diagram on top of the other.

The $n$-braids in $A$ are generated by the regular Artin braids $\{\sigma_i\}$, which stay within the square, and a further braid $\tau$ in which all strings move one place to the right as shown here, 
\[\tau = \taubraidAnn.\]
This braid satisfies $\sigma_i \tau=\tau\sigma_{i+1}$ for $i=1,\ldots, n-2$ and $\sigma_{n-1}\tau^2=\tau^2\sigma_1$.
\begin{remark} Graham and Lehrer introduce $\sigma_n:=\tau^{-1}\sigma_{n-1}\tau$ to work uniformly with the indices $\bmod \ n$.
\end{remark}

 We write $x_i$ for the braid \[ x_i\ =\  \xiibraidAnn\] in which the $i$th point moves once round the annulus as shown, and no others move.
The elements $x_i$ commute among themselves and can be used, along with the positive permutation braids, to  give a linear basis for the affine Hecke algebra $\dot H_n$.  

Explicitly, the elements
\[\{v_\rho:=x_1^{r_1}x_2^{r_2}\ldots x_n^{r_n} b_\omega\}, \rho=(\mathbf r_\rho,\omega_\rho)\in I=\Z^n\times S_n\] form a linear basis for the affine Hecke algebra $\dot H_n(z)$ over the polynomial  ring $\Z[z]$. Here $\dot H_n(z)$ is defined as the quotient of the group ring of $n$-braids in the annulus, factored by the quadratic relation
\[a^2-1=za\] for $a=\sigma_i$. The relation may also be put in factorised form \[(a-s)(a+s^{-1})=0, z=s-s^{-1}
.\]

\begin{remark}
The positive permutation braid $b_\omega$ for the permutation $\omega\in S_n$ can be defined geometrically as a braid having a diagram with $n$  arcs only such that the string $i$, which starts from point $i$ at the bottom, goes to point $\omega(i)$ at the top, and at any crossing point string $i$ crosses over string $j$ at any crossing point when $i<j$. It is well-known that any braid diagram satisfying this condition   depends only on the permutation $\omega$ of the endpoints.

The braids $v_\rho$ in the annulus satisfy the same condition there, in that  at any crossing string $i$ lies over string $j$ if $i<j$.
\end{remark}

In the skein $\sk_n(A)$ we will have diagrams that include closed curves. In particular we have diagrams such as 
\[\frontid \] where a single closed curve lies behind the identity braid.
More generally we can define an algebra homomorphism 
\[\phi:\CC\to \sk_n(A)\] by setting
\[\phi(X)=\labellist\small
\pinlabel {$X$} at 345 335
\endlabellist\frontid\] where the combination of curves in the annulus representing $X$ are placed behind the identity braid as shown.

We can now present our description of the full skein $\sk_n(A)$ of the annulus as the affine Hecke algebra $\dot H_n$ with coefficients extended via $\phi$ to $\CC$.

\begin{theorem} \label{annulusHecke}
The full skein  $\sk_n(A)$ of framed oriented $n$-tangles  in the annulus satisfies
\[\sk_n(F)\cong \CC\otimes \dot H_n,\] where $\CC =\sk(A)$ is the full skein of closed curves in the annulus, lying as curves completely behind the identity braid.
\end{theorem}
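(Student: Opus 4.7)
The plan is to construct a $\Lambda$-linear map
\[\Psi: \dot H_n \otimes_\Lambda \CC \to \sk_n(A),\qquad h \otimes c \mapsto h\cdot \phi(c),\]
and to show it is bijective. Since the image of $\phi$ consists of closed curves pushed completely behind the identity braid, these curves can be freely isotoped past any tangle part, so elements of $\phi(\CC)$ are central. This makes $\Psi$ a well-defined algebra homomorphism once one checks that the Homfly quadratic relation in $\sk_n(A)$ specialises to the Hecke quadratic relation $\sigma_i^2 - 1 = z\sigma_i$, and that the framing relation is consistent with the fixed framing at $J$; both are immediate from the definitions in Section 2.

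For surjectivity I would carry out an inductive skein reduction on a diagram $D$ in $A$, adapting the argument for $\sk_n(D^2)$ in \cite{MT90}. Use the framing relation to kill curls, then scan the crossings in a systematic order: wherever a crossing violates the positive-permutation-braid condition, apply the quadratic relation to swap it, paying the price of a simpler smoothed diagram with one fewer crossing. The braid identities $\sigma_i\tau = \tau \sigma_{i+1}$ and $\sigma_{n-1}\tau^2 = \tau^2\sigma_1$ let any remaining winding be repackaged in terms of the commuting generators $x_i$. Closed curves which are entangled with the arcs can be pulled free by further applications of the quadratic relation and then isotoped behind the braid, so that they enter through $\phi$. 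Iterating, $D$ is expressed as a $\CC$-linear combination of the basis elements $v_\rho = x_1^{r_1}\cdots x_n^{r_n}b_\omega$ from the remark.

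For injectivity I would use the evaluation homomorphism $e: \Lambda \to \Z[\delta]$ of Section~2, sending $z\mapsto 0$ and $v\mapsto 1$ while keeping $\delta$. Under $e$ the quadratic relation collapses to $\sigma_i^2 = 1$, so that $\sk_n(A)\otimes_\Lambda \Z[\delta]$ becomes an algebra in which the braid part of a diagram depends only on its underlying element of the affine symmetric group $\widetilde S_n$, and closed components contribute only powers of $\delta$. By Turaev's theorem \cite{Tur88}, $\CC\otimes_\Lambda \Z[\delta]$ remains a polynomial algebra on his generators $\{A_m\}$, and in particular is $\Lambda$-torsion-free. The $v_\rho$ project to genuinely distinct elements of $\widetilde S_n$ (the winding data $\mathbf r$ can be read off a diagram on $A$ by counting signed intersections with a fixed arc $\{*\}\times I$, and $\omega$ can be read off the permutation of endpoints), so any relation $\sum_\rho c_\rho v_\rho = 0$ in $\sk_n(A)$ forces $e(c_\rho)=0$ for each $\rho$, and hence $c_\rho = 0$ by torsion-freeness.

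The main obstacle I expect is in the surjectivity reduction, specifically in controlling the interaction between closed curves and arcs that wind around the annulus: pushing a closed curve past an arc carrying a non-trivial power of $\tau$ can generate new crossings before they simplify. Termination must be justified by a lexicographic complexity invariant (for instance: number of crossings, then unsigned sum of winding numbers, then some measure of how badly the arcs fail to be in positive-permutation-braid form), in the spirit of the MT90 bookkeeping but with the extra annular datum taken into account.
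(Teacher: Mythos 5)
Your surjectivity argument is essentially the paper's: induct on crossing number, switch crossings via the quadratic relation to reach a totally ascending form in which the arcs lie in front of all closed curves, and read off the closed components as an element of $\CC$ through $\phi$. Your worry about termination when closed curves are entangled with winding arcs does not arise in that scheme: once the arcs are totally ascending they lie above everything and can be isotoped independently, so the closed curves simply fall behind; no lexicographic invariant beyond (crossing number, position of the first bad crossing) is needed.

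The injectivity argument, however, has a genuine gap. From a relation $\sum_\rho c_\rho v_\rho=0$ with $c_\rho\in\CC$ you apply the evaluation $e\colon\Lambda\to\Z[\delta]$, deduce $e(c_\rho)=0$ from the independence of the specialised $v_\rho$, and conclude $c_\rho=0$ ``by torsion-freeness.'' But $e$ has a large kernel (it kills $z$ and $v-1$), so $e(c_\rho)=0$ is far weaker than $c_\rho=0$, and no torsion-freeness property of $\CC'$ bridges that gap: the $\Lambda$-module $\Lambda/(z)$ shows the pattern of inference is invalid, since its generator becomes a free generator after applying $e$ yet satisfies the nontrivial relation $z\cdot 1=0$. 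A specialisation can only certify that a \emph{fixed, known} element is nonzero; it cannot be applied to the unknown coefficients of a hypothetical relation. The paper avoids this by constructing a pairing: the closure map $\Cl\colon\sk_n(A)\to\CC$ gives, for each finite index set $J$, a matrix $C_{\mu,\nu}=\Cl(v_\mu v_\nu^{-1})\in\CC$ whose determinant is a single fixed element of $\CC$. The specialisation $p_0\circ E$ is legitimately applied to \emph{that}, showing the determinant has uncancelled leading term $\delta^{kn}$ and is therefore nonzero; the adjoint identity $\adj(C_J)\,C_J=\det(C_J)I_k$ then turns the relation into $\det(C_J)\,a_\rho=0$ inside the integral domain $\CC$, forcing $a_\rho=0$. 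You would need to supply this pairing, or some other device converting independence after specialisation into independence over $\CC$, for your proof to close.
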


\begin{proof}[First part of the proof]
The first part of the proof is to show that any element of the skein can be written as a linear combination of products $c\beta  $ with $\beta$ an $n$-braid in $A$, and $c\in\CC$. We can be more specific and choose $\beta=x_1^{r_1}x_2^{r_2}\ldots x_n^{r_n} b_\omega$ for some integer powers $r_1, \ldots, r_n$ and positive permutation braid $b_\omega, \omega\in S_n$.

Work inductively on the number $k$ of crossings in the diagram of an $n$-tangle, drawn in the square with left and right sides identified. 

If $k=0$ the diagram is isotopic to some power  $\tau^l$, along with some number $m\ge 0$ of disjoint unknotted curves, giving $\delta^m \tau^l$. This can readily be written in the required form.

Order the starting points of the tangles $1,\ldots,n$ and put the tangle strings in order into totally ascending form. This means that starting from the input point $1$ and  following along the strings in order the crossing points on string $i$ are met as over-crossings  for the first time when working along  string $i$ from start to finish. If a diagram is not in this form we switch the  crossing we first meet which is not an over-crossing, using the quadratic skein relation. The difference is a smoothed diagram, with one less crossing point, which is covered by our induction hypothesis.

 Once a diagram is in totally ascending form the tangle strings will lie in front of any closed curves, which  then determine an element $c\in \CC$. The tangle strings then lie above each other in order, and can be isotoped independently.  The whole tangle can then be isotoped to a braid in the annulus of the form $v_\rho =x_1^{r_1} x_2^{r_2}\ldots x_n^{r_n} b_\omega$. In this isotopy the framing on the individual strings may have extra twists, when compared to the isotopic braid. This can be accounted for by multiplying by a suitable power of $v$.
 \end{proof}
 \begin{proof}[Second part of the proof]
 In theorem \ref{independence} we prove that the braids $\{v_\rho, \rho\in I\}$ are independent over $\CC$. We have just shown that they form a spanning set for $\sk_n(A)$ regarded as an algebra with coefficients in $\CC$. Hence they form a basis over $\CC$ for $\sk_n(A)$, while $\{v_\rho, \rho\in I\}$ form a basis for $\dot H_n$. This proves the result that
 \[\sk_n(A)\cong\CC\otimes \dot H_n.\]
 \end{proof}

To show the independence of the braids $\{v_\rho\}$ in the skein $\sk_n(A)$ we use a closure map from $n$-tangles to curves in the annulus.

The closure map $\Cl:\sk_n(A)\to\sk(A)=\CC$ is a $\Lambda$-linear map defined by closing off the points at the top by curves in front of our diagram to points at the bottom, using the extra curves shown here. \[ \closeAnn\]

This  map satisfies \[\Cl (c T) =c\Cl(T)\] for any $c\in\CC$ since the curves of $c$ lie behind the identity braid and can move apart from the closure curves  in the annulus. The map $\Cl$ then behaves well with respect to our extra coefficients in $\CC$.

\begin{theorem}\label{independence}
The elements $v_\rho, \rho\in I$ are linearly independent over $\CC$.
\end{theorem}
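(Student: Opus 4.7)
The plan is to use the closure map $\Cl:\sk_n(A)\to \CC$ as a $\CC$-valued trace and to build from it a matrix of pairings whose nondegeneracy forces the required independence. Concretely, assume a relation $\sum_{\rho\in I} c_\rho v_\rho = 0$ in $\sk_n(A)$ with only finitely many $c_\rho\in \CC$ nonzero, and aim to deduce $c_\rho = 0$ for all $\rho$.

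For each $\sigma \in I$, multiply the relation on the left by $v_\sigma^{-1}$, which is a well-defined element of $\sk_n(A)$ since the skein quadratic relation gives $\sigma_i^{-1}=\sigma_i - z$ and the $x_i$ and $\tau$ are invertible as braids. Applying $\Cl$ and using its $\CC$-linearity yields
\[
\sum_\rho c_\rho\, M_{\sigma\rho} = 0, \qquad M_{\sigma\rho} := \Cl(v_\sigma^{-1} v_\rho) \in \CC,
\]
for every $\sigma\in I$. The theorem then reduces to showing that, restricted to any finite column set, the matrix $M$ has trivial right kernel over $\CC$.

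To establish this I would pass through the specialization $e:\Lambda \to \Z[\delta]$ with $z=0$, $v=1$. Under $e$ the quadratic skein relation collapses to $\sigma_i^2=1$, so in the specialized algebra the $n$-braids become honest elements of the semidirect product $\Z^n \rtimes S_n$ generated by the commuting $x_i$ and the $\sigma_i$, and $v_\sigma^{-1}v_\rho$ represents a specific group element with underlying permutation $\omega_\sigma^{-1}\omega_\rho$ and a combined exponent vector $\mathbf{t}_{\sigma\rho}\in\Z^n$ obtained by conjugating $\mathbf{r}_\rho-\mathbf{r}_\sigma$ through the relevant permutation. The closure of such a group element in the annulus decomposes into one closed curve per cycle of $\omega_\sigma^{-1}\omega_\rho$, each winding a total of $\sum_{i\in C}t_{\sigma\rho,i}$ times around $A$. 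By Turaev's theorem $\CC$, and hence its specialization, is a free polynomial algebra on the generators $A_m$ indexed by winding number, so these closures give concrete monomials in the $A_m$. Ordering $I$ lexicographically by (cycle type of $\omega_\rho$, sorted multiset of cycle winding sums of $\mathbf{r}_\rho$) one checks that after selecting a suitable bijection $\rho\mapsto\sigma(\rho)$ the matrix $e(M)_{\sigma(\rho'),\rho}$ is triangular with nonzero entries on the diagonal, giving the required nondegeneracy; this lifts back to the unspecialized matrix $M$ since $\CC$ is a free $\Lambda$-module and $e$ is injective on monomials in the $A_m$.

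The principal obstacle is the combinatorial bookkeeping that identifies the leading term of $e(M_{\sigma(\rho),\rho})$ and rules out accidental coincidences between the $\CC$-monomials produced by different $\rho$: one must verify precisely how the group product $v_\sigma^{-1}v_\rho$ redistributes the exponents of $\mathbf{r}_\sigma$ and $\mathbf{r}_\rho$ into cycle sums for $\omega_\sigma^{-1}\omega_\rho$, and show that within each block of $I$ of fixed cycle type a distinguishing choice of $\sigma(\rho)$ always exists. Once this combinatorial lemma is in place the trace-style nondegeneracy argument mirrors the disk computation of Morton--Traczyk in \cite{MT90} and yields the theorem.
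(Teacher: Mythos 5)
Your overall strategy --- pair the putative relation against the $v_\sigma^{-1}$ via the closure map, form the matrix $M_{\sigma\rho}=\Cl(v_\sigma^{-1}v_\rho)$, and detect its non-degeneracy after the specialization $e(z)=0$, $e(v)=1$ --- is exactly the paper's, but you have deferred the decisive step as an acknowledged ``principal obstacle,'' and the mechanism you propose for it does not work as stated. In the specialized skein $\CC'$ every entry $E(M_{\sigma\rho})$ is a \emph{nonzero} monomial $\prod_i A_{m_i}$ (with $A_0=\delta$) in Turaev's free polynomial algebra, so no choice of ordering and bijection $\rho\mapsto\sigma(\rho)$ can make a finite square submatrix literally triangular: there are no zero entries to put above the diagonal. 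The idea you are missing is the further ring homomorphism $p_0:\CC\to\Lambda$ (equivalently $\CC'\to\Z[\delta]$) that extracts the constant term, i.e.\ sends $A_m\mapsto 0$ for $m\ne 0$. Composing with $p_0$ kills every entry whose closure has a component of nonzero winding number. Taking $\sigma(\rho)=\rho$, the diagonal entries are $\Cl(\mathrm{Id})=\delta^n$; if $\omega_\sigma\ne\omega_\rho$ the closure has $r<n$ components and the entry becomes $0$ or $\delta^r$; if $\omega_\sigma=\omega_\rho$ but $\sigma\ne\rho$ some component has nonzero winding number and the entry becomes $0$. The determinant of the $k\times k$ specialized matrix then has the uncancelled leading term $\delta^{kn}$, and all of your combinatorial bookkeeping about cycle types and winding sums evaporates.

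There is a second, logical gap in how you return from the specialization to the original claim. Showing that $e(M)$ (restricted to a finite square block) has trivial right kernel only yields $E(c_\rho)=0$, i.e.\ $c_\rho\in\ker E$, not $c_\rho=0$; the appeal to ``$\CC$ is a free $\Lambda$-module and $e$ is injective on monomials'' does not repair this. What does lift is non-vanishing of the determinant, since $E$ and $p_0$ are ring homomorphisms and hence commute with $\det$: from $p_0(E(\det M_J))\ne 0$ in $\Z[\delta]$ you get $\det M_J\ne 0$ in $\CC$. You then need one more step, which the paper supplies: the adjugate identity $\adj(M_J)\,M_J=\det(M_J)\,I_k$ over the commutative ring $\CC$ gives $\det(M_J)\,c_\rho=0$ for each $\rho$, and since $\CC$ is an integral domain this forces $c_\rho=0$. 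With the $p_0$ step and the adjugate argument inserted, your outline becomes the paper's proof.
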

Our proof of theorem \ref{independence} depends on the following result,  which will be proved later.

\begin{theorem} \label{determinant}
Let $J\subset I$ be any finite subset of $I$, and set $k=|J|$. Define a $k\times k$ matrix $C_J$ with entries $C_{\mu,\nu}$ in the commutative ring $\CC$ for each $\mu,\nu\in J$ by taking
\[C_{\mu, \nu}:=\Cl(v_\mu v_\nu^{-1})\in\CC.\]
Then \[ \det(C_J) \ne 0\in\CC.\]

\end{theorem}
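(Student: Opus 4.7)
The plan is to apply the evaluation homomorphism $e: \Lambda \to \Z[\delta]$ (which sets $z = 0$ and $v = 1$) and show that the image $\det(e_*(C_J))$ is already nonzero in the polynomial ring $\Z[\delta][A_m : m \neq 0] \cong e_*(\CC)$; this will force $\det(C_J) \neq 0$ in $\CC$. Under $e_*$ the quadratic skein relation degenerates to $X = Y$, so crossings can be exchanged freely, and the framing relation trivializes. Hence any framed oriented link in $A$ is equivalent under $e_*$ to the disjoint union of its components placed in standard position (unlinked, unknotted, zero framing), and its image is the monomial $\prod_j A_{w_j}$ indexed by the signed winding numbers of its components, using the convention $A_0 = \delta$ that comes from the unknot relation.

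Applied to $C_{\mu, \nu} = \Cl(v_\mu v_\nu^{-1})$, this reduces the problem to a topological/combinatorial computation. The components of this closure correspond to the cycles of the permutation $\pi_{\mu,\nu} = \omega_\nu^{-1}\omega_\mu$; since $x_i$ winds only the $i$-th strand while positive permutation braids and closure arcs contribute no winding, the strand starting at position $i$ accumulates winding $r_i^\mu - r_{\pi(i)}^\nu$. Summing along a cycle $C = (i_1, \pi(i_1), \ldots)$ telescopes to
\[w_C(\mu,\nu) = \sum_{i \in C}(r_i^\mu - r_i^\nu),\]
so
\[e_*(C_{\mu,\nu}) = \prod_{C \text{ cycle of } \pi_{\mu,\nu}} A_{w_C(\mu,\nu)}.\]
In particular $e_*(C_{\mu,\mu}) = \delta^n$, since then $\pi = e$ has $n$ singleton cycles each of winding zero.

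It remains to expand $\det(e_*(C_J)) = \sum_{\sigma \in S_J} \mathrm{sgn}(\sigma) \prod_\mu e_*(C_{\mu,\sigma(\mu)})$ and isolate the coefficient of the monomial $\delta^{n|J|}$. Each factor $e_*(C_{\mu,\sigma(\mu)})$ is a product of at most $n$ generators from $\{A_m, \delta\}$, with exactly $n$ of them only when the braid permutation is trivial, so the product equals $\delta^{n|J|}$ only when every factor equals $\delta^n$. This forces both $\omega_{\sigma(\mu)} = \omega_\mu$ and $\mathbf{r}^{\sigma(\mu)} = \mathbf{r}^\mu$ for each $\mu$; since the elements of $J$ are distinct in $I = \Z^n \times S_n$, this means $\sigma(\mu) = \mu$ for all $\mu$, and only $\sigma = e$ contributes to this monomial, with coefficient $+1$. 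Hence $\det(e_*(C_J)) \neq 0$ in the polynomial ring, which proves the theorem. The main obstacle is the winding bookkeeping in the middle step: verifying rigorously that under $e_*$ a link in the annulus is genuinely determined by its multiset of signed component windings (using that crossing changes suffice to unlink and unknot components), and tracking strand windings along the braid-permutation cycles to arrive at the clean formula for $w_C$. Once these are in hand, the determinant argument is essentially forced.
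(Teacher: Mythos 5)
Your proof is correct and follows essentially the same route as the paper: evaluate at $z=0$, $v=1$ so that each entry $C_{\mu,\nu}$ becomes the monomial in Turaev's generators recording the winding numbers of the components of the closure, note that the diagonal entries are $\delta^n$ while every other factor either has fewer than $n$ components or involves some $A_m$ with $m\ne 0$, and conclude that the monomial $\delta^{nk}$ survives in the determinant with coefficient $1$. The only cosmetic difference is that the paper additionally projects to the constant term via $p_0:\CC\to\Lambda$ and computes the leading term of a determinant over $\Z[\delta]$, whereas you extract the coefficient of $\delta^{nk}$ directly in the polynomial ring $\Z[\delta][A_m]$; the two arguments are equivalent.
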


\begin{proof}[Proof of theorem \ref{independence}] Suppose that there is a linear relation \[\sum_{\mu\in J} a_\mu v_\mu=0, a_\mu\in\CC,\] for some finite subset $J\subset I$ with $|J|=k$. Construct the matrix $C_J$ as above, and then
construct the $k\times k$ \emph{adjoint matrix} $\adj(C_J)$, also with entries in the commutative ring $\CC$,  which satisfies \[\adj(C_J)C_J =C_J \adj(C_J)=\det(C_J)\times (I_k).\]
 Write $\adj(C_J):=D_J$ with entries $D_{\nu,\rho},\rho\in J$. Then
\[\sum_{\nu\in J} C_{\mu,\nu} D_{\nu,\rho} =\det(C_j) \delta_{\mu,\rho},\] where $\delta_{\mu,\rho}$ is the Kronecker delta, equal to $1$ when $\rho=\mu$ and $0$ otherwise.

For each $\nu\in J$ we have
\[0=\sum_{\mu\in J} a_\mu v_\mu v_\nu^{-1}=\sum_{\mu\in J} a_\mu C_{\mu,\nu}.\] Then for each $\rho\in J$ we get
\[0=\sum_{\mu,\nu\in J} a_\mu C_{\mu,\nu}D_{\nu,\rho} =\det(C_J)a_\rho.\]

Since  $\det(C_J)\ne 0$ and the ring $\CC$ has no zero-divisors  it follows that  the coefficients $a_\rho$ in the relation are all zero.  The elements $v_\rho, \rho\in J$ in $\sk_n(A)$ are thus independent.
\end{proof}

Once we have proved theorem \ref{determinant} we then have completed the proof of our main theorem  \ref{annulusHecke} that
\[\sk_n(A) \cong \CC\otimes \dot H_n.\]

\begin{remark}
The argument above, using only coefficients $a_\mu\in \Z[s^{\pm 1}, v^{\pm 1}]$, confirms directly the result of Graham and Lehrer \cite{GL03} that the braid skein $\bsk_n(A)$ of the annulus $A$ is isomorphic to the affine Hecke algebra $\dot H_n$, with the same basis elements $\{v_\rho\}$. 
\end{remark}

Before proving theorem \ref{determinant} it is worth looking at some useful features of the skein $\CC$.  
Turaev's original analysis of this skein \cite{Tur88} shows that it is a free polynomial algebra with explicit generators $\{A_m, m\in\Z-\{0\}\}$ each represented by a closed curve with winding number $m$ in the annulus. We will suppose that we are using coefficients in $\Lambda$. There is then a ring homomorphism $p_0:\CC\to \Lambda$ which selects the scalar term only of the polynomial.

Write $\CC'$ for the free polynomial algebra over $\Z[\delta]$ on the same generators $A_m$. Recall that there is a ring homomorphism $e:\Lambda \to \Z[\delta]$ defined by $e(z)=0,e(v)=1,e(\delta )= \delta$. Apply $e$ to the coefficients of each monomial to define a homomorphism  $E:\CC\to\CC'$  which satisfies $p_0(E(X))=e(p_0(X))$ for any $X\in\CC$.

At the skein level the map $E$ allows us to   pull  curves through each other, and to change their framing, since the skein relations simplify to
\bc
$\Xor=\Yor, \quad
\rcurlor =\lcurlor =\Idor, \quad\Idor \ \unknot = \delta \ \Idor$
\ec 
after applying $e$ to the coefficients. More formally, if $X$ and $Y$  are two diagrams related by these simplified relations then $E(X)=E(Y)\in\CC'$. 

In the simplified skein $\CC'$ any link diagram $L$ with components $L_1,..,L_r$ can be separated into the product of the individual components, and each of these components can be altered to one of the curves $A_m$ or to the unknotted curve $\delta:=A_0$. Write $m_i$ for the winding number of $L_i$ in the annulus. We then have the skein representation of $L$ in $\CC'$ as
\[E(L)=\prod_{i=1}^r A_{m_i}.\]

\begin{remark}
Back in the skein $\CC$ we could follow through the steps in the skein resolution of $L$ to write
\[L=v^\tau \prod_{i=1}^r A_{m_i} + K\] for some $\tau\in\Z$ and $K\in (s-s^{-1})\CC$.
\end{remark}

\begin{proof} [Proof of theorem \ref{determinant}]

To show that $\det(C_J)\ne 0\in\CC$ it is enough to prove that $p_0(E(\det(C_J)))\ne 0$  in $\Z[\delta]$. Now the determinant of a matrix is a polynomial in its entries, and both  $ E $ and $p_0$ are ring homomorphisms. Hence \[p_0(E(\det(C_J))) = \det(p_0(E(C_J))). \] We must then calculate  the determinant of the matrix with entries $p_0(E(C_{\mu,\nu}))$.

Now each of the matrix entries $C_{\mu,\nu}\in\CC$ is represented by a single diagram. The closure of the braid $v_\mu v_\nu^{-1}$ is a link with $r\le n$ components $L_1,\ldots,L_r$, where $r$ is the number of disjoint cycles in the permutation $\omega_\mu \omega_\nu^{-1}$. Hence
\[E(C_{\mu,\nu})=\prod_{i=1}^r A_{m_i},\] where $L_i$ has winding number $m_i$ in the annulus and we take $A_0=\delta$.
If any of the winding numbers $m_i$ is non-zero then
\[p_0(E(C_{\mu,\nu}))=0,\] otherwise \[p_0(E(C_{\mu,\nu}))=\delta^r.\]

When $\mu=\nu$ we have $C_{\mu,\mu}=\Cl(Id)=\delta^n$, the unlink with $n$ components. 
If $\omega_\mu\ne\omega_\nu$ then $r<n$.
 If $\omega_\mu=\omega_\nu$ then the braid $v_\mu v_\nu^{-1}$ is pure, so $r=n$. Either $\mu=\nu$ and the entry is $\delta^n$ or  at least one component of the closure of $v_\mu v_\nu^{-1}$ has non-zero winding number in the annulus and the entry is $0$.
 
 Thus the diagonal entries in the matrix $p_0(E (C_J))$ are all $\delta^n$ while the remaining entries are either $0$ or $\delta^r$ for various $r<n$. The element \[p_0(E(\det(C_J)))\in \Z[\delta]\] then has a single leading term $\delta^{kn}$ arising from the diagonal entries in $C_J$. This is not cancelled by the other terms in the  expansion of the determinant, and hence the determinant is non-zero in $\Z[\delta]$. It follows at once that $\det(C_J) \ne 0 \in \CC$. 
\end{proof}

\begin{remark} The proof of independence of the positive permutation braids in the skein $\sk_n(D^2)$ in \cite{MT90} used the evaluation in $\Z[\delta]$ of a similar $k\x k$ matrix, corresponding to the finite subset $J\subset I$ of permutation braids. Indeed our proof here shows the independence of this set of braids. 

 The matrix corresponding to $C_J$ in  \cite{MT90} is not exactly that used here, as it uses mirror images rather than inverses of the spanning elements in its construction, which related to  the geometric construction of a bilinear form on $H_n$. 

The same geometric construction is also used  in \cite{MT90} in a similar argument about the independence of a spanning set for $n$-tangles using the Kauffman $2$-variable skein of $D^2$, aimed at a tangle-based realisation of the Birman-Wenzl-Murakami algebras and their relation with the Brauer algebras. This indeed was the primary goal of \cite{MT90}, and the mirror method was devised because some of the spanning elements used were not invertible in the Kauffman skein.

It is quite likely that an analysis of the Kauffman $2$-variable skein of $n$-tangles in the annulus could be completed along similar lines to the analysis here for the Homfly skein.
\end{remark}
\section{Central elements}
The homomorphism $\phi:\CC\to \sk_n(A)$, placing closed curves behind the identity braid, which we used to introduce scalar coefficients into the algebra has images of the form $\phi(X)$ which are obviously central.  There is a similar homomorphism $\psi:\CC\to\sk_n(A)$ placing the closed curves in front of the identity braid.

The image  \[\psi(Y)=\labellist\small
\pinlabel {$Y$} at 345 335
\endlabellist\backid\]  is again obviously central.

It is interesting to consider how such central elements $\psi(Y)$ look in terms of our algebraic formulation of the skein as $\CC\otimes \dot H_n$. By construction $\psi(X)$ is just the scalar $X\otimes 1$. 

Repeated use of the skein relation shows that 
\[ \backid\ =\ z\sum_i \ \labellist\small
\pinlabel {$i$} at 260 220
\endlabellist \xiibraidAnn\ \ +\  \ \frontid\]

This shows that $\psi(P_1)= z\sum x_i +\phi(P_1)$, where $P_1\in\CC$ is represented by the single core curve of the annulus. In our algebraic setting this reads
\[\psi(P_1)=P_1 +z\sum x_i,\] showing that $\sum x_i$ is central in $\dot H_n$.

There are further elements $P_m\in\CC, m\ne 0\in \Z$, introduced in \cite{Mor02, Mor02b}, with the property that 
\beqn \labellist\small
\pinlabel {$P_m$} at 345 335
\endlabellist\backid \quad - \quad \labellist\small
\pinlabel {$P_m$} at 345 335
\endlabellist\frontid &=&  (s^m-s^{-m})\sum x_i^m. \eeqn

Then $\psi(P_m) = P_m+(s^m-s^{-m})\sum x_i^m$, and so the power sums of $x_1,\ldots, x_n$ and their inverses are also central in $\dot H_n$. We can deduce that all symmetric polynomials in $\{x^{\pm i }\}$ are central in $\dot H_n$. While this is well-known in the algebraic context we get here  a nice geometric realisation.

The elements $\{P_m\}$ can be used as generators of the algebra $\CC$ in place of Turaev's generators. Expanding an element $Y\in\CC$ in terms of $\{P_m\}$ then leads to an expression of the central element $\psi (Y) $ shown above as a symmetric polynomial in $\{x^{\pm i }\}$ with coefficients in $\CC$, for any choice of $Y$.

\bibliography{bibtex_dahamodels}{}

\providecommand{\bysame}{\leavevmode\hbox to3em{\hrulefill}\thinspace}
\providecommand{\MR}{\relax\ifhmode\unskip\space\fi MR }
\providecommand{\MRhref}[2]{%
  \href{http://www.ams.org/mathscinet-getitem?mr=#1}{#2}
}
\providecommand{\href}[2]{#2}
\begin{thebibliography}{Mor02b}

\bibitem[GL03]{GL03}
J.~J. Graham and G.~I. Lehrer, \emph{Diagram algebras, {H}ecke algebras and
  decomposition numbers at roots of unity}, Ann. Sci. \'Ecole Norm. Sup. (4)
  \textbf{36} (2003), no.~4, 479--524. \MR{2013924 (2004k:20007)}

\bibitem[HM06]{HM06}
Richard~J. Hadji and Hugh~R. Morton, \emph{A basis for the full {H}omfly skein
  of the annulus}, Math. Proc. Cambridge Philos. Soc. \textbf{141} (2006),
  no.~1, 81--100. \MR{2238644 (2007j:57006)}

\bibitem[MM08]{MM08}
H.~R. Morton and P.~M.~G. Manch{\'o}n, \emph{Geometrical relations and
  plethysms in the {H}omfly skein of the annulus}, J. Lond. Math. Soc. (2)
  \textbf{78} (2008), no.~2, 305--328. \MR{2439627 (2009h:57021)}

\bibitem[Mor02a]{Mor02}
Hugh~R. Morton, \emph{Power sums and {H}omfly skein theory}, Invariants of
  knots and 3-manifolds ({K}yoto, 2001), Geom. Topol. Monogr., vol.~4, Geom.
  Topol. Publ., Coventry, 2002, pp.~235--244 (electronic). \MR{2002613
  (2004g:57032)}

\bibitem[Mor02b]{Mor02b}
\bysame, \emph{Skein theory and the {M}urphy operators}, J. Knot Theory
  Ramifications \textbf{11} (2002), no.~4, 475--492, Knots 2000 Korea, Vol. 2
  (Yongpyong). \MR{1915490 (2003f:20013)}

\bibitem[MS17]{MS17}
Hugh Morton and Peter Samuelson, \emph{The {HOMFLYPT} skein algebra of the
  torus and the elliptic {H}all algebra}, Duke Math. J. \textbf{166} (2017),
  no.~5, 801--854. \MR{3626565}

\bibitem[MS19]{MS19}
\bysame, \emph{{DAHAs} and skein theory}, arXiv 1909.11247 (2019).

\bibitem[MT90]{MT90}
H.~R. Morton and P.~Traczyk, \emph{Knots and algebras}, `Contribuciones
  Matematicas en homenaje al profesor D. Antonio Plans Sanz de Bremond,' ed. E.
  Martin-Peinador and A. Rodez Usan, University of Zaragoza, 201-220., 1990.

\bibitem[Tur88]{Tur88}
V.~G. Turaev, \emph{The {C}onway and {K}auffman modules of a solid torus}, Zap.
  Nauchn. Sem. Leningrad. Otdel. Mat. Inst. Steklov. (LOMI) \textbf{167}
  (1988), no.~Issled. Topol. 6, 79--89, 190. \MR{964255 (90f:57012)}

\end{thebibliography}
\bibliographystyle{amsalpha}

\end{document}